\newtheorem*{theorem*}{Theorem}
\newtheorem*{corollary*}{Corollary}
\newtheorem{example}{Example}
\newtheorem*{remark*}{Remark}
\newtheorem*{proposition*}{Proposition}
 \newtheorem{theorem}{Theorem}
 \newtheorem{corollary}[theorem]{Corollary}
 \newtheorem{proposition}[theorem]{Proposition}
\newcommand{\ler}[1]{\left( #1 \right)}
\newcommand{\prob}{\mathcal{P}(\X)}
\newcommand{\meas}{\mathcal{M}(\X)}
\newcommand{\dirac}{\Delta(\X)}
\newcommand{\ffi}{\varphi}
\newcommand{\ws}{\mathcal{W}_p(\X)}
\newcommand{\W}{W_p^p}
\newcommand{\diagonal}{\mathrm{Diag}(\X^2)}
\newcommand{\cmn}{\mathcal{C}(\mu,\nu)}
\newcommand{\X}{\mathcal{X}}
\newcommand{\setx}{\{x\}}
\newcommand{\setn}{\{n\}}
\newcommand{\pic}{\pi^*}
\newcommand{\sfdx}{S_{f(\delta_x)}}
\newcommand{\sfdy}{S_{f(\delta_y)}}
\begin{document}
\title[On isometric embeddings of Wasserstein spaces -- the discrete case]{On isometric embeddings of Wasserstein spaces \\ -- the discrete case}

\author[Gy\"orgy P\'al Geh\'er]{Gy\"orgy P\'al Geh\'er}
\address{Gy\"orgy P\'al Geh\'er, Department of Mathematics and Statistics\\ University of Reading\\ Whiteknights\\ P.O.
Box 220\\ Reading RG6 6AX\\ United Kingdom}
\email{gehergyuri@gmail.com or G.P.Geher@reading.ac.uk\newline
\hspace{1.6cm} http://www.math.u-szeged.hu/\~{}gehergy}

\author[Tam\'as Titkos]{Tam\'as Titkos}
\address{Tam\'as Titkos, Alfr\'ed R\'enyi Institute of Mathematics\\ Hungarian Academy of Sciences\\ Re\'altanoda u. 13-15.\\
Budapest H-1053\\ Hungary\\ and BBS University of Applied Sciences\\ Alkotm\'any u. 9.\\
Budapest H-1054\\ Hungary}

\email{titkos.tamas@renyi.mta.hu \newline http://renyi.hu/\~{}titkos}

\author[D\'aniel Virosztek]{D\'aniel Virosztek}
\address{D\'aniel Virosztek, Institute of Science and Technology Austria \\ Am Campus 1 \\ 3400 Klos\-ter\-neuburg \\ Austria}

\email{daniel.virosztek@ist.ac.at \newline http://pub.ist.ac.at/\~{}dviroszt}

\subjclass[2010]{Primary: 54E40; 46E27  Secondary: 60A10; 60B05}

\keywords{Wasserstein space, isometric embeddings, probability measures, discrete metric}

\thanks{Geh\'er was supported by the Leverhulme Trust Early Career Fellowship (ECF-2018-125), and also by the Hungarian National Research, Development and Innovation Office (Grant no. K115383); Titkos was supported by the Hungarian National Research, Development and Innovation Office - NKFIH (grant no. PD128374 and grant no. K115383), by the J\'anos Bolyai Research Scholarship of the Hungarian Academy of Sciences, and by the \'UNKP-18-4-BGE-3 New National Excellence Program of the Ministry of Human Capacities; Virosztek was supported by the ISTFELLOW program of the Institute of Science and Technology Austria (project code IC1027FELL01) and partially supported by the Hungarian National Research, Development and Innovation Office – NKFIH (grant no. K124152 and grant no. KH129601).}

\begin{abstract} The aim of this short paper is to offer a complete characterization of all (not necessarily surjective) isometric embeddings of the Wasserstein space $\mathcal{W}_p(\X)$, where $\X$ is a countable discrete metric space and $0<p<\infty$ is any parameter value. 
Roughly speaking, we will prove that any isometric embedding can be described by a special kind of $\X\times(0,1]$-indexed family of nonnegative finite measures. Our result implies that a typical non-surjective isometric embedding of $\ws$ splits mass and does not preserve the shape of measures.
In order to stress that the lack of surjectivity is what makes things challenging, we will prove alternatively that $\ws$ is isometrically rigid for all $0<p<\infty$.
\end{abstract}

\maketitle

\section{Introduction}
In order to understand a mathematical structure, it is natural to start with exploring its symmetries. In the case of metric spaces, this leads naturally to the study of isometries, and more generally, to the study of isometric embeddings.

As there is a crucial difference between these two types of maps and since our main result concerns this distinctions, let us fix their definitions here:
\begin{itemize}
	\item[--] an \emph{isometric embedding} is a map that preserves the distance between elements, which is not necessarily surjective,
	
	\item[--] an \emph{isometry} is a surjective map that preserves distance, which is sometimes called a \emph{surjective isometry}.

\end{itemize}
Of course, every isometry is an isometric embedding, but usually there are several isometric embeddings that are non-surjective, hence not isometries. The investigation of isometries has a long history, let us first introduce some of it, in particular those results which are most relevant to this paper.
One of the pioneers of these investigations was Banach, who presented the first characterizations of isometries on several classical spaces. Probably the most famous one is the Banach--Stone theorem, which says that isometries between $C(K)$-type spaces are induced by homeomorphisms of the underlying spaces followed by possible changes of sign in the function values on clopen (i.e., both closed and open) sets. When speaking only about this particular theorem, there is a vast literature of interesting generalizations, see the books \cite{fleming-jamison-1, fleming-jamison-2}. We mention briefly here only those in which \emph{metric spaces of probability measures} are involved, for a more detailed survey we refer the reader to \cite{Virosztek}. 
The first result in this direction is the description of all isometries of the non-linear space of all Borel probability measures over $\mathbb{R}$ with respect to the Kolmogorov--Smirnov distance, namely, Dolinar and Moln\'ar showed in \cite{Dolinar-Molnar} that they are always induced by homeomorphisms of $\mathbb{R},$ see also \cite{Molnar-CEJM}. 
Another metric appearing in statistics is the Kuiper metric, which was discussed recently in \cite{Kuiper} in full detail. Maybe the most relevant metrics in probability theory are those that metrise the weak converge like the L\'evy-, the L\'evy--Prokhorov-, and the Wasserstein metrics. Moln\'ar proved in \cite{Molnar-Levy} that the space of all Borel probability measures over $\mathbb{R}$ endowed with the L\'evy metric is isometrically rigid, i.e. every isometry is induced by an isometry of the underlying space $\mathbb{R}$.
Generalizing Moln\'ar's result, it has been proved in \cite{L-P} that the space of all Borel probability measures over a real and separable Banach space endowed with the L\'evy--Prokhorov metric is also isometrically rigid. 

The most important developments in our considerations have been done by Bertrand and Kloeckner in connection with the Wasserstein metric \cite{Bertrand-Kloeckner,Bertrand-Kloeckner 2,K-ann}. Besides that the Wasserstein distance metrises the weak convergence of probability measures, its importance also lies in its role in geometric investigations of metric spaces, for more information see \cite{LV,VRS,S,V,V-orig} and the references therein. Among many other deep results, Bertrand and Kloeckner have managed to prove that the Wasserstein space $\mathcal{W}_2(X)$ is isometrically rigid for any negatively curved geodesically complete Hadamard space $X$. 
We highlight also Kloeckner's results on the isometry group of $\mathcal{W}_2(\mathbb{R}^n)$, in particular, isometries of $\mathcal{W}_2(\mathbb{R}^n)$ are usually not induced by only one mapping of $\mathbb{R}^n$ -- unlike in the aforementioned results. Moreover, it turned out, which is one of the main results of \cite{K-ann}, that $\mathcal{W}_2(\mathbb{R})$ admits exotic isometries that does not even preserve the shape of measures. This result is pretty uncommon and it raises several questions, see Section 8 in \cite{K-ann}, and also the last section of this paper. One can imagine that things could become even more complicated when one drops the surjectivity condition and tries to characterize general isometric embeddings.

Concerning the characterization of isometric embeddings -- which is our main interest in this paper -- there are very few results in the literature. The main reason is that usually there is no hope for finding an elegant description of all isometric embeddings, because very wild maps can satisfy the distance preserving property. However, there are a few cases when the characterization of all isometric embeddings is known due to the underlying additional structure of the metric space -- let us now mention two of them. First, it is well-known that all isometric embeddings of strictly convex real Banach spaces are automatically affine, i.e. linear up to translation, so in order to characterize their isometric embeddings it is enough to explore the structure of linear norm-preserving operators. The latter was done by Lamperti in \cite{Lamperti} for $L^p$ spaces over $\sigma$-finite measure spaces when $1< p<\infty, p\neq 2$. Therefore a characterization of isometric embeddings of these $L^p$ spaces follows from Lamperti's result.
Another example is the famous Wigner's theorem about quantum mechanical symmetry transformations, which gives a characterization of isometric embeddings of the space of all rank-one projections of a Hilbert space with respect to the operator norm, see e.g. \cite{Ge-Wigner}, and also \cite{Ge-JFA} for a generalization. In this case there is a certain projective structure which can be taken advantage of.
\par
Our aim in this paper is to offer a complete description of all isometric embeddings of the Wasserstein space $\mathcal{W}_p(\X)$, where $\X$ is a countable discrete metric space and $0<p<\infty$ is any parameter. Roughly speaking, we will prove that any isometric embedding can be described by a special kind of $\X\times(0,1]$-indexed family of nonnegative finite measures, and vice versa, any such family of measures determines a distance preserving transformation. 
In the study of isometries it is a quite usual phenomenon that isometries also preserve some other underlying structure of the metric space. For instance, in \cite{Bertrand-Kloeckner, Dolinar-Molnar, Kuiper, L-P} and \cite{Molnar-Levy} it turned out -- as a consequence of the main theorems -- that every isometry is automatically affine, i.e. preserves convex combinations of measures; and in \cite{K-ann} that every isometry of $\mathcal{W}_2(\mathbb{R})$ preserves geodesically convex combinations of measures. 
Here we shall see that for a typical non-surjective isometric embedding of $\mathcal{W}_p(\X)$ this is not true anymore. Moreover, we will also see that a general isometric embedding splits mass (that is, it does not leave the set of Dirac masses invariant), and does not preserve the shape of measures (for a precise definition see the penultimate paragraph of this paper). To demonstrate that the real difficulty here is the lack of surjectivity, we will also prove alternatively that $\ws$ is isometrically rigid for all $0<p<\infty$.
It can be somewhat surprising that although the $W_{\infty}$-distance can be obtained as a limit of $W_p$ distances, the structure of its isometric embeddings is completely different.
\par
Let us now fix the setting and introduce the necessary definitions and notations to state and prove our results. Let $X\neq\emptyset$ be a countable set, and let $\rho:X^2\to\{0,1\}$ be the discrete metric, i.e., $\rho(x,y):=1$ if $x\neq y$ and $\rho(x,x):=0$ for all $x,y\in X$. To avoid trivialities, we will always assume that $X$ has at least two elements. We will denote the Polish space $(X,\rho)$ shortly by $\X$. The symbols $\prob$ and $\meas$ stand for the sets of probability measures and nonnegative finite measures on the power set of $\X$, respectively. For $\mu\in\meas$ and $T\subseteq\X$ the symbol $\mu|_T\in\meas$ stands for the restricted measure defined by
\begin{equation*}
\mu|_T(A):=\mu(A\cap T)\qquad\mbox{for all}~A\subseteq\X.
\end{equation*}
As usual, $\delta_x$ denotes the Dirac measure (or point mass) concentrated to $x\in \X$, i.e. $\delta_x(A)=1$ if $x\in A$, and $\delta_x(A)=0$ if $x\notin A$. The set of Dirac measures will be denoted by $\dirac=\{\delta_x\,|\,x\in \X\}$. It is obvious that each element of $\meas$ can be written as a weighted sum of Dirac measures, namely $\mu=\sum_{x\in \X}\mu(\setx)\cdot\delta_x$. The support of $\mu$ in this special case is just the set $S_{\mu}=\{x\in \X\,|\,\mu(\setx)>0\}$. As it is known, see Theorem 36.1 in \cite{ab}, the set of nonnegative finite measures on a fixed $\sigma$-algebra is a lattice. In this discrete case, the greatest lower bound $\mu\wedge\nu$ of two measures $\mu,\nu\in\meas$ can be calculated easily as
\begin{equation*}
\mu\wedge\nu = \sum_{x\in \X}\min\big\{\mu(\setx),\nu(\setx)\big\}\cdot\delta_x.
\end{equation*}
In order to introduce the Wasserstein space $\ws$, we need some more definitions. Since
\begin{equation*}
\mathcal{P}_p(\X):=\left\{\mu\in\prob\,\Big|\,\int_{\X}\rho(x,x_0)^p~d\mu(x)<\infty\quad\mbox{for some (and hence all)}~x_0\in\X\right\}
\end{equation*}
coincides with $\prob$ and $\rho^p=\rho$ for all parameter values $p \in (0, \infty),$ we will write always $\prob$ without indicating $p.$
We say that a Borel probability measure $\pi$ on $\X^2$ is a \emph{coupling} of $\mu,\nu\in \prob$ if it satisfies
\begin{equation}\label{coupling def}
\pi(A\times\X)=\mu(A)\qquad\mbox{and}\qquad\pi(\X\times A)=\nu(A)\qquad\quad\mbox{for all}~A\subseteq\X.
\end{equation}
The set of all couplings of $\mu$ and $\nu$ is denoted by $\cmn.$
The $p$-Wasserstein distance (which is indeed a metric on $\prob$) of $\mu$ and $\nu$ is defined by
\begin{equation}\label{wasser-metric}
W_p(\mu,\nu):=\left(\inf_{\pi\in\cmn}\int\limits_{\X^2} \rho(x,y)^p~d\pi(x,y)\right)^{1/p}
\end{equation}
if $p\geq 1$, and by
\begin{equation}\label{wasser-metric p<1}
W_p(\mu,\nu):=\inf_{\pi\in\cmn}\int\limits_{\X^2} \rho(x,y)^p~d\pi(x,y)
\end{equation}
if $p\in(0,1)$. For more details and historical comments we refer the reader to \cite[Chapter 6]{V}. Observe that $W_p(\mu,\nu)=W_1(\mu,\nu)$ holds for all $\mu,\nu\in \prob$ and $p\in(0,1)$, because $\rho^p=\rho$. The metric space $\ler{\prob,W_p}$ is denoted shortly by $\ws$ and is called the \emph{p-Wasserstein space over $\X$.}
We will call a transformation $f:\ws\to\ws$  an \emph{isometric embedding} if
\begin{equation*}W_p(f(\mu),f(\nu))=W_p(\mu,\nu)~\qquad\mbox{for all}~\mu,\nu\in\ws.
\end{equation*}
In the next section we present our main theorem, which is a complete characterization of isometric embeddings of $\ws$.

\section{Characterization of isometric embeddings}\label{main theorem}

First we express the Wasserstein distance in terms of the greatest lower bound of measures.
Note that this formula might be folklore, however, for the sake of completeness and for the convenience of the reader, we include it.

\begin{proposition} \label{prop}
	For any $1\leq p<\infty$ and $\mu, \nu \in \mathcal{W}_p(\X)$ we have
	\begin{equation}\label{W1-min}
		\W(\mu,\nu)=1-(\mu\wedge\nu)(\X).
	\end{equation}
\end{proposition}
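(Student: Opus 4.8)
The plan is to prove the identity $\W(\mu,\nu)=1-(\mu\wedge\nu)(\X)$ by establishing the two inequalities separately, exploiting the special structure of the discrete metric $\rho$. The key observation is that since $\rho(x,y)^p=\rho(x,y)\in\{0,1\}$, the integral $\int_{\X^2}\rho(x,y)^p\,d\pi(x,y)$ simply counts the total $\pi$-mass sitting off the diagonal $\diagonal$, that is, it equals $1-\pi(\diagonal)$. Therefore minimizing the transport cost over all couplings $\pi\in\cmn$ is equivalent to \emph{maximizing} the diagonal mass $\pi(\diagonal)=\sum_{x\in\X}\pi(\setx\times\setx)$, and the proposition reduces to showing that this maximal diagonal mass equals $(\mu\wedge\nu)(\X)=\sum_{x\in\X}\min\{\mu(\setx),\nu(\setx)\}$.

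For the inequality $\W(\mu,\nu)\geq 1-(\mu\wedge\nu)(\X)$, which amounts to the upper bound $\pi(\setx\times\setx)\leq\min\{\mu(\setx),\nu(\setx)\}$ for every coupling, I would argue directly from the marginal conditions \eqref{coupling def}. For any fixed $x\in\X$, taking $A=\setx$ in the first marginal constraint gives $\pi(\setx\times\X)=\mu(\setx)$, and hence $\pi(\setx\times\setx)\leq\pi(\setx\times\X)=\mu(\setx)$ by monotonicity of measures; symmetrically $\pi(\setx\times\setx)\leq\nu(\setx)$. Summing over $x$ yields $\pi(\diagonal)\leq(\mu\wedge\nu)(\X)$ for every coupling, and taking the infimum over couplings of the cost $1-\pi(\diagonal)$ gives the desired inequality.

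For the reverse inequality $\W(\mu,\nu)\leq 1-(\mu\wedge\nu)(\X)$, I would exhibit an explicit coupling that attains the bound. The natural candidate places the common mass on the diagonal and couples the remaining mass arbitrarily: define $\pi$ by first setting $\pi(\setx\times\setx):=\min\{\mu(\setx),\nu(\setx)\}$, which accounts for exactly $(\mu\wedge\nu)(\X)$ units of mass, and then distributing the residual marginals $\mu-(\mu\wedge\nu)$ and $\nu-(\mu\wedge\nu)$ as an arbitrary product-type coupling off the diagonal. One checks that these leftover marginals are nonnegative measures of equal total mass $1-(\mu\wedge\nu)(\X)$, so such a completion exists and genuinely defines an element of $\cmn$. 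Its cost is $1-\pi(\diagonal)\leq 1-(\mu\wedge\nu)(\X)$, giving the matching inequality.

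The only real care needed is bookkeeping rather than conceptual difficulty: one must verify that the proposed $\pi$ respects both marginal conditions simultaneously and that the residual measures are genuine nonnegative measures of matching total mass so the off-diagonal completion is legitimate. I do not expect any serious obstacle, since the discreteness of $\X$ lets me work entirely with the countable family of atomic weights $\mu(\setx)$ and $\nu(\setx)$ and reduce everything to the elementary identity $a+b-\min\{a,b\}=\max\{a,b\}$ together with the fact that $\sum_x\mu(\setx)=\sum_x\nu(\setx)=1$.
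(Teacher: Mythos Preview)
Your proposal is correct and follows essentially the same approach as the paper: both bound $\pi(\diagonal)\le(\mu\wedge\nu)(\X)$ from the marginal constraints and then exhibit the optimal coupling that places $\mu\wedge\nu$ on the diagonal and couples the residuals $(\mu-\nu)_+$ and $(\nu-\mu)_+$ via a (normalized) product measure. The paper writes this coupling out explicitly as $i_{\#}(\mu\wedge\nu)+\frac{(\mu-\nu)_+\otimes(\nu-\mu)_+}{1-(\mu\wedge\nu)(\X)}$ and separates off the trivial case $\mu=\nu$, but otherwise the argument is the same.
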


\begin{proof}
The formula \eqref{W1-min} is obviously true for $\mu=\nu,$ hence we assume in the sequel that $\mu \neq \nu.$ For any coupling measure $\pi\in\cmn$ and point $x\in\X$ we notice that
\begin{equation*}
	\pi\big(\{(x,x)\}\big) \leq \min\big\{\pi(\setx\times\X),\pi(\X\times\setx)\big\} = \min\big\{\mu(\setx),\nu(\setx)\big\},
\end{equation*}
thus for the diagonal $\diagonal=\{(x,x)\,|\,x\in \X\}$ we conclude $\pi(\diagonal)\leq(\mu\wedge\nu)(\X)$. 
According to \eqref{wasser-metric} this inequality implies
\begin{equation*}
	\W(\mu,\nu)=\inf_{\pi\in\cmn}\int\limits_{\X^2\setminus\diagonal}1~d\pi(x,y)
	= \inf_{\pi\in\cmn} \left(1- \pi(\diagonal)\right)
	\geq 1-(\mu\wedge\nu)(\X),
\end{equation*}
therefore it is enough to construct a coupling $\pic\in\cmn$ for which $\pic(\diagonal)=(\mu\wedge\nu)(\X)$ holds.
To this aim, let us introduce some notation. For a signed measure $\theta$ on $\X,$ let $\theta_+$ denote the nonnegative measure given by $\theta_+\left(\{x\}\right)=\max \{0, \theta\left(\{x\}\right)\}$ for every $x \in \X.$ (In other words, $\theta_+$ is the nonnegative part of $\theta.$) Furthermore, for $\eta, \xi \in \meas,$ we denote by $\eta \otimes \xi$ the product measure of $\eta$ and $\xi,$ that is, $\left(\eta \otimes \xi\right) \left(A \times B\right)=\eta(A)\cdot\xi(B)$ for all $A,B \subseteq \X.$ Finally, let $i: \, \X \rightarrow \X^2; \, x \mapsto (x,x)$ be the diagonal embedding of $\X$ into $\X^2,$ and let us define $\pic$ by
\begin{equation*}
\pic:=i_{\#}\left(\mu \wedge \nu\right)+ \frac{\left(\mu-\nu\right)_+ \otimes \left(\nu-\mu\right)_+}{1-(\mu\wedge\nu)(\X)}
\end{equation*}
where $i_{\#}\left(\mu \wedge \nu\right)$ is the push-forward measure of $\mu \wedge \nu$ by the map $i$, that is,
\begin{equation*}
	\big(i_{\#}(\mu \wedge \nu)\big)(A)=(\mu \wedge \nu)\big(i^{-1}(A)\big) \quad \text{for all Borel sets} \; A \subseteq \X^2.
\end{equation*}
Note that $\left(\mu-\nu\right)_+(\X)=\left(\nu-\mu\right)_+(\X)=1-(\mu\wedge\nu)(\X),$ hence $\pic$ is indeed a probability measure on $\X^2.$ Moreover, it is also clear that $\pic(\diagonal)=(\mu \wedge \nu)(\X).$ To see that $\pic$ is a coupling of $\mu$ and $\nu,$ we first check that $\pic(A \times \X)=\mu(A)$ holds for every $A \subseteq \X$ by the following simple computation:
\begin{equation*}
\pic(A \times \X)=(\mu \wedge \nu)(A)+\left(\mu-\nu\right)_+(A) \cdot \frac{\left(\nu-\mu\right)_+(\X)}{1-(\mu\wedge\nu)(\X)}=(\mu \wedge \nu)(A)+\left(\mu-\nu\right)_+(A)=\mu(A).
\end{equation*}
As identity $\pic(\X\times A)=\nu(A)$ can be verified similarly, we conclude \eqref{W1-min} as it was claimed. 
\end{proof}

Note that for $0<p<1$ we have $W_p(\mu,\nu)=W_1(\mu,\nu)=1-(\mu\wedge\nu)(\X)$.
As one consequence of \eqref{W1-min} we can easily show that the Wasserstein space $\ws$ is isometrically rigid. That is, the isometry group of $\ws$ (consisting of all bijective distance-preserving transformations of $\ws$) is isomorphic to the isometry group of $\X$, which is just the permutation group of $\X$. 

\begin{corollary} \label{cor}
Let $p\in(0,\infty)$ and $f:\ws\to\ws$ be an isometry. 
Then there exists a bijection $\sigma:\X\to\X$ such that $f$ is a push-forward of $\sigma$, i.e.
\begin{equation}\label{pushforward}
	f(\mu)(\setx)=\mu(\{\sigma^{-1}(x)\})\qquad\mbox{for all}~x\in\X.
\end{equation}
Conversely, if a bijection $\sigma:\X\to\X$ is given, then \eqref{pushforward} defines an isometry.
\end{corollary}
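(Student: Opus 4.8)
The plan is to reduce everything to the total-variation–type quantity $(\mu\wedge\nu)(\X)$ and then to single out the Dirac masses by a purely metric condition. By Proposition~\ref{prop} and the remark following it, for every $p\in(0,\infty)$ the distance $W_p(\mu,\nu)$ is a fixed strictly increasing function of the number $D(\mu,\nu):=1-(\mu\wedge\nu)(\X)$ (namely $W_p=D^{1/p}$ for $p\ge1$ and $W_p=D$ for $0<p<1$). Consequently a bijection $f$ is a $W_p$-isometry if and only if it preserves $D$, i.e. $(\mu\wedge\nu)(\X)=(f(\mu)\wedge f(\nu))(\X)$ for all $\mu,\nu$. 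In particular the diameter of $\ws$ equals $1$, attained exactly by pairs with disjoint supports. The converse statement of the corollary is then immediate: if $\sigma$ is a bijection and $f$ is defined by \eqref{pushforward}, a direct computation gives $(f(\mu)\wedge f(\nu))(\X)=\sum_{x}\min\{\mu(\{\sigma^{-1}(x)\}),\nu(\{\sigma^{-1}(x)\})\}=(\mu\wedge\nu)(\X)$, so $f$ preserves $D$; since $f$ is a bijection with inverse the push-forward by $\sigma^{-1}$, it is an isometry.

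For the forward direction the heart of the matter is to recognise Dirac masses metrically. I would work with the relation of being at maximal distance: for $\mu\in\prob$ set $\mathrm{Far}(\mu):=\{\nu\in\prob : D(\mu,\nu)=1\}=\{\nu : S_\nu\cap S_\mu=\emptyset\}$. Testing against Dirac masses $\delta_z$ (for which $\delta_z\in\mathrm{Far}(\mu)\iff z\notin S_\mu$) shows that $\mathrm{Far}(\mu)\subseteq\mathrm{Far}(\nu)$ holds precisely when $S_\nu\subseteq S_\mu$; hence $\mathrm{Far}(\mu)$ is maximal with respect to inclusion exactly when $S_\mu$ is a minimal nonempty support, i.e. a singleton, i.e. when $\mu$ is a Dirac mass. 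Since $\mathrm{Far}$ is defined through the metric and $f$ is a bijective isometry, one has $f(\mathrm{Far}(\mu))=\mathrm{Far}(f(\mu))$, so $f$ preserves inclusions among the $\mathrm{Far}$-sets and therefore their maximality. Thus $f$ maps $\dirac$ bijectively onto itself, and there is a bijection $\sigma:\X\to\X$ with $f(\delta_x)=\delta_{\sigma(x)}$.

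Finally I would propagate this to arbitrary measures. From the lattice formula one reads off $D(\delta_x,\mu)=1-\min\{1,\mu(\{x\})\}=1-\mu(\{x\})$. Applying the already established facts that $f$ preserves $D$ and that $f(\delta_x)=\delta_{\sigma(x)}$ gives, for every $x\in\X$ and every $\mu\in\prob$, $1-f(\mu)(\{\sigma(x)\})=D(\delta_{\sigma(x)},f(\mu))=D(\delta_x,\mu)=1-\mu(\{x\})$, whence $f(\mu)(\{\sigma(x)\})=\mu(\{x\})$. As $\sigma$ is a bijection this is exactly \eqref{pushforward}, completing the proof.

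I expect the only genuinely nontrivial step to be the metric characterisation of Dirac masses in the second paragraph; once the $\mathrm{Far}$-sets are available, both the identification of $\sigma$ and its extension to all of $\prob$ are short computations built on the single identity $D(\delta_x,\mu)=1-\mu(\{x\})$.
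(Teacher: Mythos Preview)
Your proof is correct and follows the same overall architecture as the paper: reduce to preservation of $(\mu\wedge\nu)(\X)$, show that $f$ restricts to a bijection of $\dirac$, and then recover all point masses via the identity $(\delta_x\wedge\mu)(\X)=\mu(\{x\})$.

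The one place where you diverge is in the identification of Dirac masses. The paper argues directly by contradiction using $f^{-1}$: if $x_1\neq x_2$ both lie in $S_{f(\delta_x)}$, then $f^{-1}(\delta_{x_1})$ and $f^{-1}(\delta_{x_2})$ would have disjoint supports (distance $1$ apart) yet both have $x$ in their support (distance $<1$ from $\delta_x$). Your route via maximality of the $\mathrm{Far}$-sets is a cleaner, more structural packaging of the same underlying observation that $D(\mu,\nu)=1$ iff $S_\mu\cap S_\nu=\emptyset$; it has the mild advantage of giving an intrinsic metric characterisation of $\dirac$ that could be reused elsewhere, at the cost of one extra layer of abstraction. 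Both arguments rely on bijectivity in an essential way (you need it so that $f(\mathrm{Far}(\mu))=\mathrm{Far}(f(\mu))$ rather than merely $\subseteq$), which is consistent with the paper's emphasis that surjectivity is what makes the problem tractable.
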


\begin{proof}
By \eqref{W1-min} we have 
\begin{equation}\label{diszjunkt}
	S_{\mu}\cap S_{\nu}=\emptyset \;\iff\; \W(\mu,\nu)=1 \quad \text{for all} \; \mu,\nu\in\ws.
\end{equation}
Since $f$ is a bijection, the restriction $f|_{\dirac}$ is a bijection of $\dirac$. 
Indeed, assume that $x_1,x_2\in\sfdx$, $x_1\neq x_2$ happens, then we have $\W(f(\delta_x),\delta_{x_i})\neq 1$ ($i\in\{1,2\}$), therefore taking the inverse images gives
\begin{equation*}
\W(\delta_x,f^{-1}(\delta_{x_1}))\neq1,\qquad\W(\delta_x,f^{-1}(\delta_{x_2}))\neq1,\qquad\W(f^{-1}(\delta_{x_1}),f^{-1}(\delta_{x_2}))=1,
\end{equation*}
which is a contradiction by \eqref{diszjunkt}. Hence the $f$-image of every Dirac mass is a Dirac mass, and we argue similarly for the inverse $f^{-1}$.

Therefore the action of $f$ on $\dirac$ induces a bijection $\sigma$ on $\X$ defined by $f(\delta_x)=\delta_{\sigma(x)}$. Now, take any measure $\mu\in\prob$ and any $x \in \X$, and observe by \eqref{W1-min} that
\begin{equation*}
\mu(\{\sigma^{-1}(x)\})=\big(\mu\wedge\delta_{\sigma^{-1}(x)}\big)(\X)=\big(f(\mu)\wedge f(\delta_{\sigma^{-1}(x)})\big)(\X)=\big(f(\mu)\wedge\delta_x\big)(\X)=f(\mu)(\{x\}),
\end{equation*}
which proves one direction.
The reverse direction follows from the fact that the distance is invariant under permutations of the underlying space.\end{proof}

Now, we turn to the much more interesting problem of characterizing all isometric embeddings of $\ws$. 
In order to demonstrate that indeed quite wild maps can preserve the Wasserstein distance, we begin with two examples of isometric embeddings which are not affine, moreover, they split Dirac masses and therefore do not preserve the shape of measures.

\begin{example} 
\textup{Let us fix a parameter value $p\in(0,\infty)$, and let $\X = \{1,2,3,\dots\}$ be the set of all natural numbers endowed with the discrete metric. Define $f:\ws\to\ws$ as
\begin{equation*} 
	f\left(\sum\limits_{n\in \X}c_n\cdot\delta_n\right)=\sum\limits_{n\in \X}\big[\ln(1+c_n)\cdot \delta_{2n}+\big(c_n-\ln(1+c_n)\big)\cdot\delta_{2n+1}\big].
\end{equation*}
One can show by definition and using \eqref{W1-min} that this is indeed a non-surjective isometric embedding. Observe that the range of $f$ does not contain any Dirac mass. What happens here is roughly speaking the following: $f$ splits Dirac masses
\begin{equation*}
	f(\delta_n)=\ln2\cdot\delta_{2n}+(1-\ln2)\cdot\delta_{2n+1},
\end{equation*}
and redistributes weights. On one hand, if $n\neq m$, then $S_{f(\delta_n)}\cap S_{f(\delta_m)}=\emptyset$, thus $f$ induces a partition of $\X$, in fact, the support of $f(\mu)$ is always the disjoint union
\begin{equation}\label{suppeq}
	S_{f(\mu)} = \bigcup_{n\in S_{\mu}}S_{f(\delta_n)} = \bigcup_{n\in S_{\mu}} \{2n, 2n+1\}.
\end{equation}
On the other hand, we see that if $\mu(\setn)=c_n$, then $f(\mu)\big(\{2n,2n+1\}\big)=c_n$, and if $\mu(\setn)\leq\nu(\setn)$ then
\begin{equation*}
	f(\mu)|_{\{2n,2n+1\}}\leq f(\nu)|_{\{2n,2n+1\}}.
\end{equation*}
}
\end{example}

The second example is even wilder.

\begin{example}
\textup{
	Let $\X$ be again the set of all natural numbers endowed with the discrete metric.
	Also let $p_n$ denote the $n$th prime number.
	Define $f:\ws\to\ws$ as
	\begin{equation*} 
		f\left(\sum\limits_{n\in \X} c_n\cdot\delta_n\right)
		= \sum\limits_{n\in \X} \left( \sum_{j=1}^{N(c_n)} \frac{1}{2^j}\delta_{p_n^j} + \Bigg( c_n - \sum_{j=1}^{N(c_n)} \frac{1}{2^j} \Bigg) \delta_{p_n^{N(c_n)+1}} \right),
	\end{equation*}
	where for every $c\in [0,1)$ the non-negative integer $N(c)$ is defined by 
	\begin{equation*}
		\sum_{j=1}^{N(c)} \frac{1}{2^j} \leq c < \sum_{j=1}^{N(c)+1} \frac{1}{2^j}
	\end{equation*}
	and $N(1) = \infty$.
	Note that this map also shares similar properties mentioned in the previous example, which we do not repeat here.
	However, this map has a quite peculiar property, namely that every Dirac mass is mapped into an infinitely supported measure, but any other finitely supported measure is mapped into a finitely supported one. In particular, instead of \eqref{suppeq} in general we only have
	\begin{equation*}
		S_{f(\mu)} \subset \bigcup_{n\in S_{\mu}}S_{f(\delta_n)} = \bigcup_{n\in S_{\mu}} \left\{p_n^j \colon j=1,2,\dots\right\}.
	\end{equation*}
}
\end{example}

Now, we are in the position to claim and prove our main result.
We will see that in a particular sense every isometric embedding $f$ looks like the above examples:
$f$ will induce a partition on $\X$ and a family of nonnegative finite measures satisfying some special properties from which we can construct the whole action of $f$.

\begin{theorem} \label{thm}
Let $p\in(0,\infty)$ and $f:\ws\to\ws$ be an isometric embedding, i.e.,
\begin{equation}
W_p(f(\mu),f(\nu))=W_p(\mu,\nu)\qquad\mbox{for all}\quad\mu,\nu\in\ws.
\end{equation}
Then there exists a unique family of measures
\begin{equation}\label{Phi meas family}
\Phi:=\big(\ffi_{x,t}\big)_{x\in\X,t\in(0,1]}\in\meas^{\X\times(0,1]}
\end{equation}
that satisfies the properties
\begin{itemize}
\item[\textup{(a)}] for all $x\neq y$: $S_{\ffi_{x,1}}\cap S_{\ffi_{y,1}}=\emptyset$,
\item[\textup{(b)}] for all $x\in\X$ and $0 < t \leq 1$: $\ffi_{x,t}(\X)=t$,
\item[\textup{(c)}] for all $x\in\X$ and $0<s<t\leq 1$: $\ffi_{x,s}\leq\ffi_{x,t}$,
\end{itemize}
and that generates $f$ in the following sense 
\begin{equation}\label{f}
f(\mu)=\sum_{x\in S_{\mu}}\ffi_{x,\mu(\setx)}\qquad\mbox{for all}\quad\mu\in\ws.
\end{equation}
Conversely, every $\X\times(0,1]$-indexed family of measures satisfying \textup{(a)--(c)} generates an isometric embedding via the formula \eqref{f}.
\end{theorem}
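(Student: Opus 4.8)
The plan is to reduce the isometric embedding condition to a single scalar identity and then to use one elementary lattice fact as the workhorse. By Proposition~\ref{prop} (together with the remark that $W_p=W_1$ for $0<p<1$), the distance $W_p(\mu,\nu)$ depends on $\mu,\nu$ only through $(\mu\wedge\nu)(\X)$, so $f$ is an isometric embedding if and only if
\[
(f(\mu)\wedge f(\nu))(\X)=(\mu\wedge\nu)(\X)\qquad\text{for all }\mu,\nu\in\ws .
\]
The key observation I would isolate first is the following rigidity of minima in $\meas$: if $\eta,\xi\in\meas$ satisfy $(\eta\wedge\xi)(\X)=\eta(\X)$, then $\eta\le\xi$ (because $\sum_{z}\big(\eta(\{z\})-\min\{\eta(\{z\}),\xi(\{z\})\}\big)=0$ has nonnegative summands), and consequently $\eta=\xi$ whenever also $\eta(\X)=\xi(\X)$. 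Every identification below comes from engineering a situation where an overlap mass attains the maximum permitted by the total masses, and then invoking this fact.

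Next I would build the family. Put $\ffi_{x,1}:=f(\delta_x)$; since disjoint supports mean overlap $0$, preservation of overlaps gives $\sfx\cap\sfy=\emptyset$ for $x\ne y$ (property (a)), and $\ffi_{x,1}(\X)=1$ is (b) at $t=1$. For $t<1$ I would probe $f$ on the two-point measure $\mu=t\delta_x+(1-t)\delta_y$ (any $y\ne x$): comparing with $\delta_x$ and with $\delta_y$ yields $(f(\mu)\wedge\ffi_{x,1})(\X)=t$ and $(f(\mu)\wedge\ffi_{y,1})(\X)=1-t$, and since $\sfx,\sfy$ are disjoint these two overlaps already exhaust the total mass $1$; the rigidity fact then forces $f(\mu)$ to live on $\sfx\cup\sfy$ and to split as a piece of mass $t$ dominated by $\ffi_{x,1}$ on $\sfx$ plus a piece of mass $1-t$ dominated by $\ffi_{y,1}$ on $\sfy$. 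I define $\fixt$ to be the first piece, so (b) is automatic. Comparing two reference points $y,y'$ through $(\mu\wedge\mu')(\X)=t$ shows (again by rigidity) that this piece does not depend on $y$, so $\fixt$ is well defined; and testing $t\delta_x+(1-t)\delta_y$ against $s\delta_x+(1-s)\delta_y$ for $s<t$ gives $(\fixs\wedge\fixt)(\X)=s=\fixs(\X)$, i.e. $\fixs\le\fixt$, which is (c).

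For an arbitrary $\mu$ the same mechanism reconstructs $f(\mu)$. Testing against each $\delta_x$, $x\in S_\mu$, gives $(f(\mu)\wedge\ffi_{x,1})(\X)=\mu(\setx)$, and summing over the pairwise disjoint blocks $\sfx$ (the masses $\mu(\setx)$ add up to $1$) forces $f(\mu)$ to be concentrated on $\bigcup_{x\in S_\mu}\sfx$ with $f(\mu)|_{\sfx}\le\ffi_{x,1}$ of mass $\mu(\setx)$. To recognise this block as $\ffi_{x,\mu(\setx)}$ I would compare $\mu$ with $\mu(\setx)\delta_x+(1-\mu(\setx))\delta_z$ for some $z\ne x$: a short overlap computation (which works whether or not $z\in S_\mu$, since $\min\{\mu(\{z\}),1-\mu(\setx)\}=\mu(\{z\})$) together with rigidity pins down $f(\mu)|_{\sfx}=\ffi_{x,\mu(\setx)}$, establishing \eqref{f}. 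Uniqueness is then forced by \eqref{f} itself: evaluating at $\delta_x$ gives $\ffi_{x,1}=f(\delta_x)$, and restricting \eqref{f} at two-point measures to the block $\sfx$ recovers each $\fixt$. Finally, the converse is a direct verification: given any family obeying (a)--(c), formula \eqref{f} yields a probability measure by (b), and because the blocks $\sfx$ are disjoint the overlap of $f(\mu)$ and $f(\nu)$ splits blockwise, where (c) gives $\ffi_{x,\mu(\setx)}\wedge\ffi_{x,\nu(\setx)}=\ffi_{x,\min\{\mu(\setx),\nu(\setx)\}}$ of mass $\min\{\mu(\setx),\nu(\setx)\}$; summing returns $(\mu\wedge\nu)(\X)$.

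I expect the genuine difficulty to be the reconstruction step rather than any single estimate: the whole argument hinges on the claim that $f(\mu)|_{\sfx}$ is determined by $x$ and the single number $\mu(\setx)$ alone, and it is not a priori obvious that this local data reassembles into $f(\mu)$ for every $\mu$. Making this uniform requires the two-point comparison to survive even when $S_\mu=\X$, which is exactly why the computation is arranged so that the auxiliary point $z$ may be chosen inside $S_\mu$. The remaining point requiring care is bookkeeping for infinitely supported measures: all series in play are dominated by the total mass $1$, hence absolutely convergent, and one must check that the overlap of two measures carried by a common family of pairwise disjoint blocks really is the sum of the blockwise overlaps; once this is granted, every step above goes through verbatim.
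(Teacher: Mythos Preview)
Your proof is correct and rests on the same two pillars as the paper's: the reduction of the isometry condition to preservation of overlap mass $(f(\mu)\wedge f(\nu))(\X)=(\mu\wedge\nu)(\X)$, and the lattice rigidity fact that $(\eta\wedge\xi)(\X)=\eta(\X)$ forces $\eta\le\xi$ (the paper records this as \eqref{inftrick}). The organizational difference is that the paper dispenses with the auxiliary two-point probes: instead of first building $\fixt$ from $t\delta_x+(1-t)\delta_y$ and then separately verifying that $f(\mu)|_{\sfx}=\ffi_{x,\mu(\setx)}$ for general $\mu$, the paper compares arbitrary $\mu,\nu$ directly and shows in a single chain of (in)equalities that $\big(f(\mu)|_{\sfx}\wedge f(\nu)|_{\sfx}\big)(\X)=\min\{\mu(\setx),\nu(\setx)\}$ for every $x\in S_\mu\cap S_\nu$; rigidity then gives $f(\mu)|_{\sfx}\le f(\nu)|_{\sfx}$ whenever $\mu(\setx)\le\nu(\setx)$, which simultaneously yields well-definedness of $\fixt$ and monotonicity (c). Your staged route is slightly longer but has the virtue of being more explicit about how the family is constructed and why the auxiliary point may lie inside $S_\mu$, which the paper leaves implicit.
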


\begin{proof}
As $\mathcal{W}_p(\X)$ spaces are indistinguishable from $\mathcal{W}_1(\X)$ whenever $p\in(0,1)$, we will always assume for the sake of simplicity that $p\geq1$.
Let $f$ be a fixed isometric embedding, our first step is to construct the family $\Phi$. 
Observe that by \eqref{W1-min} we have
\begin{equation}\label{inf megorzes}
	\big(f(\mu)\wedge f(\nu) \big)(\X)=\big(\mu\wedge\nu\big)(\X)\qquad\mbox{for all}~\mu,\nu\in\prob,
\end{equation}
in particular, $f$ preserves the disjointness of supports in both directions:
\begin{equation}\label{diszjunkt megorzes}
	S_\mu \cap S_\nu = \emptyset \;\;\iff\;\; S_{f(\mu)} \cap S_{f(\nu)} = \emptyset \qquad\mbox{for all}~\mu,\nu\in\prob.
\end{equation}
As a special case we obtain $\sfdx\cap\sfdy=\emptyset$ whenever $x\neq y$.
Since every $\mu\in\prob$ can be written in the form $\mu=\sum\limits_{x\in\X}\mu(\setx)\cdot\delta_x$, and the sets $\sfdx$ $(x\in\X)$ are pairwise disjoint, it is a natural idea to define a map $F:\dirac_\leq\to\meas$ on $\dirac_\leq := \{t\cdot\delta_x\,|\,t\in(0,1],\,x\in\X\}$ for which $F|_{\dirac}=f|_{\dirac}$ holds, and which is compatible with $f$. By compatibility we mean that if $0<t=\mu(\setx)=\nu(\setx)$, then 
\begin{equation*}
	f(\mu)|_{\sfdx}=f(\nu)|_{\sfdx}=F(t\cdot\delta_x)
\end{equation*} 
is satisfied.
Of course it is not clear at this point why $F$ should be well-defined.
In order to see that, it is enough to prove that if $0<s=\mu(\setx)\leq\nu(\setx)=t$ holds for $\mu,\nu\in\prob$, then $f(\mu)|_{\sfdx}\leq f(\nu)|_{\sfdx}$. 
\begin{figure}[H]
\centering
\includegraphics[width=90mm]{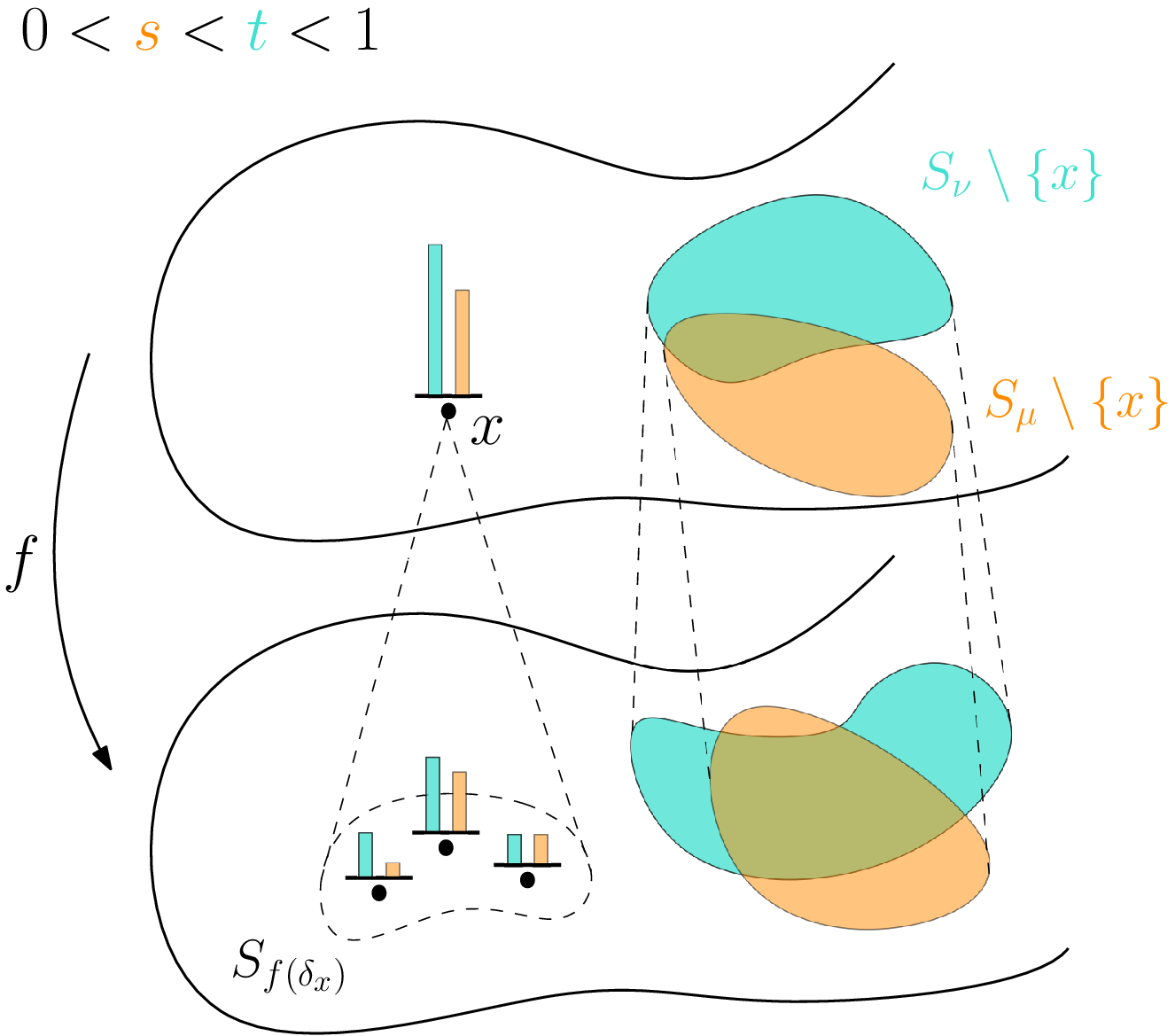}
\caption{$f(\mu)|_{\sfdx}\leq f(\nu)|_{\sfdx}$. \label{overflow}}
\end{figure}

We start by understanding the structure of $f(\mu)$. First observe that by \eqref{inf megorzes}
\begin{equation}\label{mux=fmusfdx}
	\mu(\setx)=\big(\delta_x\wedge\mu\big)(\X)=\big(f(\delta_x)\wedge f(\mu)\big)(\X)=\big(f(\delta_x)\wedge f(\mu)\big)(\sfdx)\leq f(\mu)(\sfdx)
\end{equation}
holds for all $x\in\X$. 
Since the supports of $f(\delta_x)$ are pairwise disjoint, we easily obtain that
\begin{equation*}
	\mu(\setx)=f(\mu)(\sfdx)=f(\mu)|_{\sfdx}(\X) \quad \text{for all}\;\; x\in\X,
\end{equation*}
and hence that $S_{f(\mu)}\subseteq\bigcup\limits_{x\in S_\mu}\sfdx$.
Now, for arbitrary probability measures $\mu$ and $\nu \in \prob$ we calculate
\begin{align*}
\sum\limits_{x\in S_{\mu}\cap S_{\nu}}\min\big\{\mu(\setx),\nu(\setx)\big\}&=\big(\mu\wedge\nu\big)(\X)=\big(f(\mu)\wedge f(\nu)\big)(\X)\\
&=\Big[\Big(\sum_{x\in S_{\mu}}f(\mu)|_{\sfdx}\Big)\wedge\Big(\sum_{x\in S_{\nu}}f(\nu)|_{\sfdx}\Big)\Big](\X)\\
&=\sum\limits_{x\in S_{\mu}\cap S_{\nu}}\Big(f(\mu)|_{\sfdx}\wedge f(\nu)|_{\sfdx}\Big)(\X)\\
&\leq\sum\limits_{x\in S_{\mu}\cap S_{\nu}}\min\Big\{ (f(\mu)|_{\sfdx}(\X), f(\nu)|_{\sfdx}(\X)\Big\}\\
&=\sum\limits_{x\in S_{\mu}\cap S_{\nu}}\min\big\{\mu(\setx),\nu(\setx)\big\},
\end{align*}
which forces the following for all $x\in S_{\mu}\cap S_{\nu}$:
\begin{equation}\label{forced}
\Big(f(\mu)|_{\sfdx}\wedge f(\nu)|_{\sfdx}\Big)(\X)=\min\Big\{ (f(\mu)|_{\sfdx}(\X), f(\nu)|_{\sfdx}(\X)\Big\}.
\end{equation}

At this point let us note the trivial fact that the partial ordering on the set of all non-negative finite measures can be expressed by the measure of $\X$. Indeed, we have
\begin{equation}\label{inftrick}
	\alpha\leq\beta\quad\iff\quad\alpha(\X)\leq\big(\alpha\wedge\beta)(\X) \qquad \text{for all}\; \alpha,\beta\in\meas.
\end{equation}
Therefore, if $0<s=\mu(\setx)\leq\nu(\setx)=t$ holds for a point $x\in S_{\mu}\cap S_{\nu}$, then by \eqref{forced} and \eqref{inftrick} we obtain
\begin{equation*}
	f(\mu)|_{\sfdx} \leq f(\nu)|_{\sfdx},
\end{equation*}
as was claimed.

Now, we define the family $\Phi$ by 
\begin{equation}\label{csalad}
	\ffi_{x,t} := F(t\cdot\delta_x) \qquad(x,t)\in\X\times(0,1].
\end{equation}
From here (a)--(c) is obvious, and hence \eqref{f} is also straightforward.

Finally, let us prove the reverse statement. 
Assume that $\Phi=\big(\ffi_{x,t}\big)_{x\in\X,t\in(0,1]}\in\meas^{\X\times(0,1]}$ is a family satisfying (a)--(c), then the following calculation shows that the map defined by \eqref{f} is indeed an isometric embedding:
\begin{align*}
\W(f(\mu),f(\nu))&=1-[f(\mu)\wedge f(\nu)](\X)\\
&=1-\Big[\Big(\sum_{x\in S_{\mu}}\ffi_{x,\mu(\setx)}\Big)\wedge\Big(\sum_{x\in S_{\nu}}\ffi_{x,\nu(\setx)}\Big)\Big](\X)\\
&=1-\sum\limits_{x\in S_{\mu}\cap S_{\nu}}\Big[\ffi_{x,\mu(\setx)}\wedge\ffi_{x,\nu(\setx)}\Big](\X)\\
&=1-\sum\limits_{x\in S_{\mu}\cap S_{\nu}}\ffi_{x,\min\big\{\mu(\setx),\nu(\setx)\big\}}(\X)\\
&=1-\sum\limits_{x\in S_{\mu}\cap S_{\nu}}\min\big\{\mu(\setx),\nu(\setx)\big\}=1-\big(\mu\wedge\nu)(\X)=\W(\mu,\nu).
\end{align*}
\end{proof}

Let us now make a few remarks on our main result. 
As we noted earlier, in the theory of isometries it is a quite typical phenomenon that isometries also preserve some other structure of the metric space.
For instance, in \cite{Bertrand-Kloeckner, Dolinar-Molnar, Kuiper, L-P, Molnar-Levy} all isometries were induced by either an isometry or a homeomorphism of the underlying space (which were either $\mathbb{R}$, or a real separable Banach space, or a negatively curved geodesically complete Hadamard space).
In particular, from these results it follows that isometries of those spaces are automatically affine, hence for any isometry $\phi$ once we know the $\phi$-images of two measures, the $\phi$-images of their convex combinations are also determined.
Furthermore, it follows easily that the action of $\phi$ on Dirac masses determines its action on the whole metric space.
However, this is definitely not the case for general isometric embeddings of $\ws$ (although it is the case for its isometries as we saw in Corollary \ref{cor}). 
To demonstrate this, assume we know the $f$-images of two Dirac masses $\delta_x$ and $\delta_y$, $x\neq y$, and consider an arbitrary third measure supported on $\{x,y\}$: $\mu = c\cdot\delta_x + (1-c)\cdot\delta_y \in\prob\setminus\{\delta_x,\delta_y\}$.
Then the only thing we can conclude for $f(\mu)$ is that it has the form $\mu_x + \mu_y$ where $\mu_x \leq f(\delta_x), \mu_x(\X) = c$ and $\mu_y \leq f(\delta_y), \mu_y(\X) = 1-c$ (cf. the previous two examples).
Moreover, it is not too hard to see that for any such choice of $\mu_x$ and $\mu_y$ we can construct an isometric embedding $f$ such that $f(\mu) = \mu_x + \mu_y$.

As was pointed out earlier, isometries of $W_2(\mathbb{R})$ automatically preserve geodesic convex combinations, however, an isometry of $W_2(\mathbb{R})$ is usually not determined by its action on Dirac masses, unlike in the previous paragraph.
On the other hand, it is easy to see from the results of \cite{K-ann} that any isometry of $W_2(\mathbb{R})$ is determined by its action on the set $\Delta_2'$ of all probability measures with two-point supports. It is interesting to note that any isometric embedding of $\ws$ is also determined by its action on the set $\Delta_2'(\X) := \left\{\mu\in\prob\colon \# S_\mu = 2\right\}$, although we have more flexibility than in the case of $W_2(\mathbb{R})$.

We also remark that if the underlying space $\X$ is finite, then isometric embeddings of $\ws$ are automatically surjective, hence they are implemented by a permutation of $\X$ according to Corollary \ref{cor}.

Next, we would like to point out that the limiting case when $p=\infty$ is very different. 
Let $\mathcal{W}_{\infty}(\X)$ be the set $\prob$ endowed with the metric $W_{\infty}(\mu,\nu)=\lim\limits_{p\to\infty}W_p(\mu,\nu)$ (see Proposition 3 and the references in \cite{inftyref}). 
Then according to \eqref{W1-min}, we obtain that $W_{\infty}(\mu,\nu)=1$ if and only if $\mu\neq\nu$, thus $\mathcal{W}_{\infty}(\X)$ is the set $\prob$ endowed with the discrete metric.
Therefore any injective function $f:\mathcal{W}_{\infty}(\X)\to\mathcal{W}_{\infty}(\X)$ is an isometric embedding, and any bijection is an isometry.
\par
We close this paper with some remarks about Kloeckner's questions. First we recall that for a given Polish space $M,$ a map $f:\mathcal{W}_p(M)\to\mathcal{W}_p(M)$ is called \emph{shape preserving} if for all $\mu\in \mathcal{W}_p(M)$ there exists an isometry $\psi_{\mu}:M\to M$ (depending on $\mu$), such that $f(\mu)$ is the push-forward measure of $\mu$ by the map $\psi_{\mu}.$
Note that the isometries discussed in \cite{Bertrand-Kloeckner, L-P, Molnar-Levy} are all shape preserving.
A distance preserving map is called \emph{exotic} if it is not shape preserving. 
Note that the isometries discussed in \cite{Dolinar-Molnar, Kuiper} are exotic in this particular sense, although each of them is induced by a transformation of $\mathbb{R}$, unlike in \cite{K-ann}.
Now we are in the position to formulate Kloeckner's questions \cite[Question 8.1 and Question 8.2]{K-ann}:
\begin{itemize}
\item[\textbf{Q1}] Is there a Polish (or Hadamard) space $M \neq\mathbb{R}$ such that $\mathcal{W}_2(M)$ admits exotic isometries?
\item[\textbf{Q2}] Is there a Polish space $M$ whose Wasserstein space $\mathcal{W}_2(M)$ possesses an isometry that does not preserve the set of all Dirac masses?
\end{itemize}
We cannot answer exactly these questions because we are dealing with isometric embeddings, and not with bijections. However, according to our main result (also, it is worth to revisit the previous two examples), there are many isometric embeddings $f:\mathcal{W}_2\ler{\X}\to\mathcal{W}_2\ler{\X}$ such that (i) f is exotic, and (ii) $f$ does not preserve the set of Dirac masses. Moreover, the parameter value $p=2$ is crucial as well, because as we have seen already, any bijection of $\mathcal{W}_{\infty}(\X)$ is an isometry and thus both (i) and (ii) can happen even in the bijective case, if $p=\infty$.
\par
Finally, concerning the above questions, let us remark that in a forthcoming paper we will discuss a Polish space $M$ such that for all $1<p<\infty$ the space $\mathcal{W}_p(M)$ is isometrically rigid, but the space $\mathcal{W}_1(M)$ has isometries that do not even preserve the set of all Dirac measures.

\section*{Acknowledgements}

We are grateful to L\'aszl\'o Erd\H{o}s and the referees for their helpful suggestions on the presentation of the paper.

\end{document}